\newtheorem{theo}{Theorem}[section]
\newtheorem{lemma}[theo]{Lemma}
\newtheorem{propo}[theo]{Proposition}
\newtheorem{defi}[theo]{Definition}
\newtheorem{rem}[theo]{Remark}
\newtheorem{problem}[theo]{Problem}
\newcommand\Emb{\operatorname{\bf Emb}}
\newcommand\Set{\operatorname{\bf Set}}
\newcommand\Acc{\operatorname{\bf Acc}}
\newcommand\CAT{\operatorname{\bf CAT}}
\newcommand\Ins{\operatorname{Ins}}
\newcommand\Eq{\operatorname{Eq}}
\newcommand\colim{\operatorname{colim}}
\newcommand\ca{\mathcal {A}}
\newcommand\ch{\mathcal {H}}
\newcommand\ck{\mathcal {K}}
\newcommand\cl{\mathcal {L}}
\date{August 18, 2015}
\begin{document}
\title[Limits of abstract elementary classes]
{Limits of abstract elementary classes}
\author[M. Lieberman and J. Rosick\'{y}]
{M. Lieberman and J. Rosick\'{y}}
\thanks{Supported by the Grant agency of the Czech republic under the grant P201/12/G028.} 
\address{
\newline 
Department of Mathematics and Statistics\newline
Masaryk University, Faculty of Sciences\newline
Kotl\'{a}\v{r}sk\'{a} 2, 611 37 Brno, Czech Republic\newline
lieberman@math.mini.cz\newline
rosicky@math.muni.cz
}
 
\begin{abstract}
We show that the category of abstract elementary classes (AECs) and concrete functors is closed under constructions of ``limit type", which generalizes the approach
of Mariano, Zambrano and Villaveces away from the syntactically oriented framework of institutions.  Moreover, we provide a broader view of this closure phenomenon, considering a variety of categories of accessible categories with additional structure, and relaxing the assumption that the morphisms be concrete functors. 
\end{abstract} 
\keywords{ }
\subjclass{ }

\maketitle
 
\section{Introduction}
One of the main virtues of accessible categories is that they are closed under constructions of limit type (\cite{MP}). 
This should be made precise by considering accessible functors between accessible categories and showing that the resulting
2-category is closed under appropriate limits. These limits can be reduced to products, inserters and equifiers and are called
PIE-limits. Proofs of this result (see \cite{MP}, or \cite{AR}) also show that the category of accessible categories with directed 
colimits and functors preserving directed colimits is closed under PIE-limits. The needed 2-categorical limits are explained both in
\cite{MP} and \cite{AR} and we recommend \cite{Ke} for a more systematic introduction.

Recent papers \cite{BR}, \cite{L} and \cite {LR} have shown that abstract elementary classes (\cite{B}) can be understood as special accessible categories with directed colimits.  In \cite{LR}, in particular, the authors develop a hierarchy of such categories, extending from accessible categories with directed colimits to AECs themselves.  Here we show that each stage in this hierarchy is closed under PIE-limits as well, provided we take the morphisms to be directed colimit preserving functors.  This closure becomes more problematic if we insist that the morphisms be concrete functors: here we see that the iso-fullness axiom for AECs (heretofore unneeded in the category-theoretic analysis thereof) is essential to guarantee the existence of desired limits.

Schematically, our results encompass the categories in the figure below, where the downward-accumulating properties of the objects are described in the left margin, and the properties of the morphisms are listed at the top.  

$$\xymatrix@=1.5pc{
 & & {}\save[]+<0cm,.45cm>*\txt<8pc>{Preserve\\directed\\colimits}\restore & & {}\save[]+<0cm,0cm>*\txt<8pc>{Subconcrete}\restore & & {}\save[]+<0cm,0cm>*\txt<8pc>{Concrete}\restore \\
{}\save[]+<.1cm,.05cm>*\txt<8pc>{Accessible} \restore & & \Acc\ar@{-}[d] & & & & \\
{}\save[]+<-.52cm,.05cm>*\txt<8pc>{Directed colimits}\restore & & \Acc_0\ar@{-}[d] & & & & \\
{}\save[]+<-1.35cm,.05cm>*\txt<16pc>{Concrete directed colimits}\restore & & \Acc_1\ar@{-}[d]\ar@{-}[rr] & & \Acc_1^\dagger\ar@{-}[d]\ar@{-}[rr] & & \Acc_1^\ast\ar@{-}[d] \\
{}\save[]+<-1.32cm,.05cm>*\txt<16pc>{Coherent,~concrete~monos}\restore & & \Acc_2\ar@{-}[d]\ar@{-}[rr] & & \Acc_2^\dagger\ar@{-}[d]\ar@{-}[rr] & & \Acc_2^\ast\ar@{-}[d] \\
{}\save[]+<-.34cm,.05cm>*\txt<8pc>{Iso-full, replete}\restore & & \Acc_3\ar@{-}[rr] & & \Acc_3^\dagger\ar@{-}[rr] & & \Acc_3^\ast}$$

Subconcrete functors, introduced in Definition~\ref{def3.1} below, are a natural generalization of the concrete case. We show that all the pictured categories are closed under PIE-limits in $\Acc$, with the exception of $\Acc_3$, $\Acc_1^\ast$ and $\Acc_3^\ast$. The last category has PIE-limits but it is closed
in $\Acc$ only under inserters and equifiers while products are calculated in $\Acc\downarrow\Set$.
 We note that the objects in categories along the bottom row are (equivalent to) AECs, but equipped with three different notions of morphism, ranging from the most general---functors preserving directed colimits---to a very close generalization of the syntactically-derived functors in \cite{MZV}, namely directed-colimit preserving functors that are concrete, i.e. respect underlying sets.  In particular, the closure result corresponding to the bottom right entry is the promised generalization of \cite{MZV}, shifting it out of the framework of institutions and into a more intrinsic, purely syntax-free characterization.  We consider the precise relationship between our result and that of \cite{MZV} in Remark~\ref{re3.4}.

In fact, our ambitions are broader: inspired by the example of metric AECs, in which directed colimits need not be concrete but $\aleph_1$-directed colimits always are, we consider a second version of this diagram in which we require only that the categories from the third row down have concrete $\kappa$-directed colimits for a given $\kappa$---such categories will be distinguished by the superscript $\kappa$.  In particular, the category $\Acc_3^{\dagger\kappa}$ will consist of $\kappa$-CAECs as defined in \cite{LR1}, with subconcrete functors as morphisms.  We obtain a closure result there as well.

\section{Accessible categories with directed colimits}
Recall that a category $\ck$ is $\lambda$-\textit{accessible}, $\lambda$ a regular cardinal, if it has $\lambda$-directed colimits (i.e. colimits indexed 
by a $\lambda$-directed poset) and contains, up to isomorphism, a set $\ca$ of $\lambda$-presentable objects such that each object of $\ck$ is a 
$\lambda$-directed colimit of objects from $\ca$. Here, an object $K$ is $\lambda$-\textit{presentable} if its hom-functor $\ck(K,-):\ck\to\Set$ preserves 
$\lambda$-directed colimits. A category is \textit{accessible} if it is $\lambda$-accessible for some $\lambda$. A functor $F:\ck\to\cl$ between
$\lambda$-accessible categories is called $\lambda$-accessible if it preserves $\lambda$-directed colimits. $F$ is called \textit{accessible} if it is
$\lambda$-accessible for some $\lambda$. In this way, we get the category $\Acc$ whose objects are accessible categories and morphisms are accessible functors.

\begin{rem}\label{re2.1}
{
\em
We work in the G\" odel-Bernays set theory. Thus a category $\ck$ is a class of objects together with a class $\ck(A,B)$ of morphisms
$A\to B$ for each object $A$ and $B$. It is called \textit{locally small} if all $\ck(A,B)$ are sets. Any accessible category is locally
small. It is important to observe that $\Acc$ is a category which is not locally small. The reason is that a $\lambda$-accessible functor
$F:\ck\to\cl$ is determined by its restriction on the full subcategory $\ca$ of $\lambda$-presentable objects.
}
\end{rem}

We may regard $\Acc$ as a 2-category where the 2-cells are natural transformations. As noted above, $\Acc$ is closed under appropriate 2-limits, namely PIE-limits, where ``PIE''  abbreviates ``products,'' ``inserters'' and ``equifiers.'' This means that these 2-limits exist in $\Acc$ and are calculated 
in the non-legitimate category $\CAT$ of categories, functors and natural transformations. It follows that $\Acc$ is closed under lax limits
and under pseudolimits (see \cite{MP} or \cite{AR}). 

Recall that, given functors $F,G:\ck\to\cl$, the \textit{inserter category} $\Ins(F,G)$ is the subcategory of the comma category $F\downarrow G$ 
consisting of all objects $f:FK\to GK$ and all morphisms
$$
\xymatrix@=4pc{
FK \ar [r]^{f} \ar [d]_{Fk}& GK \ar [d]^{Gk}\\
FK'\ar [r]_{f'}& GK'
}
$$
The \textit{projection functor} $P:\Ins(F,G)\to\ck$ sends $f:FK\to GK$ to $K$. The universal property of $\Ins(F,G)$ is the existence
of a natural transformation $\varphi:FP\to GP$ (given as $\varphi_{Pf}=f$) in the sense that for any $H:\ch\to\ck$ with $\psi:FH\to GH$ there
is a unique $\bar{H}:\ch\to\Ins(F,G)$ such that $P\bar{H}=H$ and $\varphi H=\psi$ (see \cite{Ke}). Since $\Acc$ is full in $\CAT$ with respect
to 2-cells, we can ignore the 2-dimensional aspect of universality. 

Given functors $F,G:\ck\to\cl$ and natural transformations $\varphi,\psi:F\to G$, the \textit{equifier} $\Eq(\varphi,\psi)$
is the full subcategory of $\ck$ consisting of all objects $K$ such that $\varphi_K=\psi_K$. Let $P:\Eq(\varphi,\psi)\to\ck$ be the inclusion.
The universal property of $\Eq(\varphi,\psi)$ is that $\varphi P=\psi P$ and for any $H:\ch\to\ck$ with $\varphi H=\psi H$ there is a unique
$\bar{H}:\ch\to\ck$ such that $P\bar{H}=H$ (see \cite{Ke}); the 2-dimensional aspect of universality can be ignored again.

We now consider accessible categories having all directed colimits. Let $\Acc_0$ be the 2-category whose objects are accessible categories with directed colimits, morphisms are functors preserving directed colimits and 2-cells are natural transformations. 

\begin{theo}\label{th2.2}
$\Acc_0$ is closed under PIE-limits in $\Acc$.
\end{theo}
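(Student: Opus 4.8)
The plan is to exploit the fact, recalled above, that $\Acc$ is already closed under PIE-limits and that these limits are computed in $\CAT$. Since every PIE-limit is assembled from products, inserters and equifiers, it suffices to treat these three constructions separately, and in each case to verify two things: first, that the category produced by the construction in $\Acc$ carries all directed colimits; and second, that the relevant projection functor preserves them, so that the universal property of the limit restricts from $\Acc$ to $\Acc_0$. In fact I expect each projection to \emph{create} directed colimits, which settles both points simultaneously: creation supplies the directed colimits in the limit category, and, since the induced comparison functor $\bar H$ satisfies $P\bar H=H$ with $H$ preserving directed colimits, it forces $\bar H$ to preserve them as well.

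For a product $\prod_i\ck_i$ directed colimits are formed componentwise, so if each $\ck_i$ has directed colimits then so does the product and the projections create them; this case is immediate. For the inserter $\Ins(F,G)$ of $F,G\colon\ck\to\cl$, I would take a directed diagram $(f_d\colon FK_d\to GK_d)$ in $\Ins(F,G)$ and form $K=\colim_d K_d$ in $\ck$. Here the hypothesis that the morphisms of $\Acc_0$ preserve directed colimits does the real work: because $F$ and $G$ preserve directed colimits, $FK=\colim_d FK_d$ and $GK=\colim_d GK_d$, so the compatible family $(f_d)$ induces a unique $f\colon FK\to GK$, and $(K,f)$ is the required colimit in $\Ins(F,G)$, projecting under $P$ to $K$. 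For the equifier $\Eq(\varphi,\psi)$ of $\varphi,\psi\colon F\to G$, I would again set $K=\colim_d K_d$ for a directed diagram lying in $\Eq(\varphi,\psi)$ and check $\varphi_K=\psi_K$: naturality gives $\varphi_K\circ F(k_d)=G(k_d)\circ\varphi_{K_d}=G(k_d)\circ\psi_{K_d}=\psi_K\circ F(k_d)$ for each colimit injection $k_d\colon K_d\to K$, and since $F$ preserves directed colimits the maps $F(k_d)$ are jointly epimorphic, whence $\varphi_K=\psi_K$ and $K$ lies in the equifier.

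The genuine content, and the only place where more than bookkeeping occurs, is the inserter step, where preservation of directed colimits by \emph{both} $F$ and $G$ is exactly what produces the comparison morphism $f\colon FK\to GK$ on the colimit object; the product and equifier steps are then routine. The universal-property verifications follow uniformly from the observation that all three projections create directed colimits, so that the comparison functor $\bar H$ inherits preservation of directed colimits directly from $H=P\bar H$.
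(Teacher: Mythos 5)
Your proposal is correct and follows essentially the same route as the paper: accessibility of the limit categories comes from the closure of $\Acc$ under PIE-limits (the paper cites the relevant results of Ad\'amek--Rosick\'y directly), and the remaining work is to check that products, inserters and equifiers of categories with directed colimits again have directed colimits and that the induced comparison functors preserve them. Your write-up merely makes explicit, via the creation-of-colimits packaging, the verifications that the paper dismisses as ``clear''---in particular the key point that preservation of directed colimits by $F$ and $G$ is exactly what equips $\Ins(F,G)$ with directed colimits.
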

\begin{proof}
Let $\ck_i$, $i\in I$ be accessible categories with directed colimits. Following \cite{AR} 2.67, the product $\prod_{i\in I}\ck_i$
is an accessible category.  Clearly, it has all directed colimits and the projections $P_i:\prod\ck_i\to\ck_i$ preserve them. Let $\cl$
be an accessible category with directed colimits and $Q_i:\cl\to\ck_i$ functors preserving directed colimits. Then the induced functor
$\cl\to\prod\ck_i$ preserves directed colimits. Hence $\prod\ck_i$ is the product in $\Acc_0$.

Let $\ck,\cl$ be accessible categories with directed colimits and $F,G:\ck\to\cl$ be functors preserving directed colimits. Following
\cite{AR} 2.72, $\Ins(F,G)$ is an accessible category which clearly has directed colimits. Let $\ch$ be an accessible category with directed colimits,
$H:\ch\to\ck$ preserve directed colimits and and $\psi:FH\to GH$ a natural transformation. Then the induced functor $\bar{H}:\ch\to\Ins(F,G)$
preserves directed colimits. Hence $\Ins(F,G)$ is an inserter in $\Acc_0$.

Finally, let $\ck,\cl$ be accessible categories with directed colimits, $F,G:\ck\to\cl$ functors preserving directed colimits 
and $\varphi,\psi:F\to G$ natural transformations. Following \cite{AR} 2.76, $\Eq(\varphi,\psi)$ is an accessible category. Again, it is clear that $\Eq(\varphi,\psi)$ has all directed colimits.  Let $\ch$ be an accessible category with directed colimits and $H:\ch\to\ck$ a functor preserving directed colimits
with $\varphi H=\psi H$. Then the induced functor $\bar{H}:\ch\to\Eq(\varphi,\psi)$ preserves directed colimits. Hence $\Eq(\varphi,\psi)$
is an equifier in $\Acc_0$.
\end{proof}

We say that $(\ck,U)$ is an \textit{accessible category with concrete directed colimits} if $\ck$ is an accessible category with directed colimits 
and $U:\ck\to\Set$ is a faithful functor to the category of sets that preserves directed colimits. Let $\Acc_1$ be the full sub-2-category of $\Acc_0$
consisting of accessible categories with concrete directed colimits. In particular, morphisms in $\Acc_1$ are functors preserving directed colimits.

\begin{theo}\label{th2.3}
$\Acc_1$ is closed under PIE-limits in $\Acc$.
\end{theo}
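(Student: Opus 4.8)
The plan is to lean entirely on Theorem~\ref{th2.2}: since $\Acc_1$ is a \emph{full} sub-2-category of $\Acc_0$ (its morphisms are again just functors preserving directed colimits, with no compatibility condition relating to the chosen forgetful functors), every PIE-limit of objects of $\Acc_1$ computed in $\Acc_0$ already has the correct underlying category, the correct directed colimits, and the correct universal property among directed-colimit-preserving functors. All that remains is to equip each of the three limit constructions with a faithful functor $U:-\to\Set$ preserving directed colimits, thereby certifying that the limit lies in $\Acc_1$; the universal property in $\Acc_1$ is then inherited verbatim, because the projection and inclusion functors and the induced factorizations produced in Theorem~\ref{th2.2} all preserve directed colimits and hence are themselves morphisms of $\Acc_1$.

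For a set-indexed family $(\ck_i,U_i)_{i\in I}$, the product $\prod_{i\in I}\ck_i$ comes with the evident candidate functors $U_iP_i:\prod_i\ck_i\to\Set$, and I would assemble them not as a product but as a \emph{disjoint union}, setting $U=\coprod_{i\in I}U_iP_i$, so that $U$ sends $(K_i)_i$ to $\coprod_{i\in I}U_i(K_i)$. This preserves directed colimits because each $U_iP_i$ does and the coproduct functor $\Set^I\to\Set$ is a left adjoint (to the diagonal), hence commutes with colimits; and it is faithful because a morphism $(k_i)_i$ is sent to $\coprod_i U_i(k_i)$, from which restriction along the canonical injection of the $i$-th summand recovers $U_i(k_i)$, so equality of two images forces $U_i(k_i)=U_i(k_i')$ and then $k_i=k_i'$ by faithfulness of each $U_i$. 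The one genuinely delicate point, and the step I expect to need the most care, is precisely this faithfulness: the naive choice $U=\prod_i U_iP_i$ need \emph{not} be faithful, since if some fiber $U_j(K_j)$ is empty then $\prod_i U_i(K_i)$ is empty and the resulting product function discards all the $U_i(k_i)$ at once. Using the disjoint union rather than the product circumvents this.

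The inserter and equifier are then routine. For the inserter $\Ins(F,G)$ of directed-colimit-preserving $F,G:(\ck,U_\ck)\to(\cl,U_\cl)$, I take the forgetful functor to be $U_\ck P$, with $P:\Ins(F,G)\to\ck$ the projection. It preserves directed colimits since $P$ does---a directed colimit of objects $f_d:FK_d\to GK_d$ is computed as $\colim_d f_d:F(\colim_d K_d)\to G(\colim_d K_d)$ using that $F,G$ preserve directed colimits, and $P$ simply reads off $\colim_d K_d$---and $U_\ck$ does; it is faithful because $P$ is faithful (a morphism of the inserter is just a $\ck$-morphism $k$ satisfying the commuting-square constraint) and $U_\ck$ is faithful. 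The equifier $\Eq(\varphi,\psi)$ is handled identically with $U_\ck P$, where now $P$ is the full inclusion into $\ck$: faithfulness is automatic for the restriction of $U_\ck$ to a full subcategory, and $P$ preserves directed colimits because $\Eq(\varphi,\psi)$ is closed under them in $\ck$ by Theorem~\ref{th2.2}. With all three limits furnished with faithful directed-colimit-preserving functors to $\Set$, closure of $\Acc_1$ under PIE-limits follows.
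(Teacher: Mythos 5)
Your proposal is correct and takes essentially the same approach as the paper: the underlying functor for the product is the coproduct $(A_i)_{i\in I}\mapsto\coprod_{i\in I}U_iA_i$ (faithful, and preserving directed colimits since coproducts commute with colimits), while for inserters and equifiers one composes the faithful projection $P:\Ins(F,G)\to\ck$, respectively the full inclusion $\Eq(\varphi,\psi)\to\ck$, with $U$. Your extra observation that the naive choice $\prod_i U_iP_i$ fails to be faithful when some $U_jK_j$ is empty correctly identifies why the coproduct is the right construction, which the paper uses without comment.
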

\begin{proof}
We must show that PIE-limits of accessible categories with concrete directed colimits have concrete directed colimits. This is evident
for inserters and equifiers because, in the first case, the projection functor $P:\Ins(F,G)\to\ck$ is faithful and, in the second case,
$\Eq(\varphi,\psi)$ is a full subcategory of $\ck$. Consider accessible categories with concrete directed colimits $(\ck_i,U_i)$, $i\in I$.
Then the functor  $U:\prod_{i\in I}\ck_i\to\Set$ sending $(A_i)_{i\in I}$ to $\coprod_{i\in I}U_iA_i$ is faithful. Since
$$
\colim\coprod_{i\in I} U_iA_i\cong\coprod_{i\in I}\colim U_iA_i,
$$
$\prod_{i\in I}\ck_i$ is an accessible category with concrete directed colimits.  
\end{proof}

\begin{rem}\label{re2.4}
{
\em
(1) We could also consider the subcategory $\Acc_1^\ast$ having the same objects as $\Acc_1$ but whose morphisms are \textit{concrete} functors 
$F:\ck_1\to\ck_2$ preserving directed colimits.  By ``concrete,'' we mean that $F$ commutes with the relevant underlying set functors, i.e. $U_2F=U_1$. 
The category $\Acc_1^\ast$ is closed in $\Acc$ under inserters and equifiers but not under products. In fact, we are in the comma category $\Acc\downarrow\Set$ where $\prod_{i\in I}(\ck_i,U_i)$ is the multiple pullbacks of $U_i$ over $\Set$. While $\Acc$ has multiple {\it pseudopullbacks}, it does not have multiple pullbacks. For multiple pullbacks, we would need all of the functors  $U_i$ to be \textit{transportable} in the sense that for any isomorphism $f:U_iA\to X$ there is a unique isomorphism $\overline{f}:A\to B$ such that $U_i(\overline{f})=f$ (this also implies $U_i\overline{B}=X$). Then a multiple pullback of $U_i$
is equivalent to their multiple pseudopullback and thus it belongs to $\Acc$. This is done for a pullback in \cite{MP} 5.1.1 and the multiple
case is analogous.

(2) Theorem \ref{th2.3} is also valid for the full sub-2-category $\Acc^\kappa_1$ of $\Acc_0$ consisting of accessible categories with directed
colimits where $\kappa$-directed colimits are concrete. These categories appear in \cite{LR1} in connection with metric abstract elementary classes.
}
\end{rem}

An accessible category $(\ck,U)$ with concrete directed colimits is \textit{coherent} if for each commutative triangle 
$$
\xymatrix@=3pc{
UA \ar[rr]^{U(h)}
\ar[dr]_{f} && UC\\
& UB \ar[ur]_{U(g)}
}
$$
there is $\overline{f}:A\to B$ in $\ck$ such that $U(\overline{f})=f$.

We say that morphisms of $\ck$ are \textit{concrete monomorphisms} if any morphism of $\ck$ is a monomorphism which is preserved by $U$. 
Let $\Acc_2$ be the full sub-2-category of $\Acc_1$ consisting of coherent accessible categories with concrete monomorphisms.

\begin{theo}\label{th2.5}
$\Acc_2$ is closed under PIE-limits in $\Acc$.
\end{theo}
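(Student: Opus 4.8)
The plan is to build directly on Theorem~\ref{th2.3}, which already tells us that the product, the inserter $\Ins(F,G)$, and the equifier $\Eq(\varphi,\psi)$ of objects of $\Acc_1$ lie in $\Acc_1$, equipped with the underlying set functors exhibited there: the restriction of $U$ for the equifier, $U_\ck\circ P$ for the inserter, and $(A_i)_{i\in I}\mapsto\coprod_i U_iA_i$ for the product. Since $\Acc_2$ is full in $\Acc_1$, it then suffices to verify, in each of the three cases, that the resulting object is coherent and has concrete monomorphisms. The equifier case is immediate: $\Eq(\varphi,\psi)$ is a full subcategory of $\ck$ carrying the restricted functor, so every morphism is a $\ck$-morphism, hence a concrete mono that remains monic in the full subcategory; and any coherence triangle over objects of $\Eq(\varphi,\psi)$ is filled in $\ck$ by a morphism that, by fullness, already lies in $\Eq(\varphi,\psi)$.

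For the inserter, concrete monomorphicity follows because the projection $P\colon\Ins(F,G)\to\ck$ is faithful and hence reflects monomorphisms, while $U_\ck P$ carries each morphism to the $U_\ck$-image of a concrete mono of $\ck$. The substantive point is coherence. Given objects $A=(K_A,a)$, $B=(K_B,b)$, $C=(K_C,c)$ of $\Ins(F,G)$, inserter morphisms $h\colon A\to C$ and $g\colon B\to C$ with underlying $\ck$-maps $h_0,g_0$, and a set map $f$ with $U_\ck(g_0)\circ f=U_\ck(h_0)$, I would first invoke coherence of $\ck$ to obtain $\bar f\colon K_A\to K_B$ with $U_\ck(\bar f)=f$; faithfulness of $U_\ck$ then upgrades the identity on underlying maps to $g_0\bar f=h_0$ in $\ck$. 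What remains is to show $\bar f$ respects the inserter structure, i.e. $b\circ F\bar f=G\bar f\circ a$. Here I would use that \emph{every} morphism of $\cl$ is a monomorphism: postcomposing both sides with the mono $Gg_0$ and applying the commuting squares for $g$ and $h$ together with $g_0\bar f=h_0$ collapses both composites to $c\circ Fh_0$, forcing equality. This is the step I expect to be the crux, since it is precisely where the concrete-monomorphism hypothesis on the \emph{codomain} category is consumed.

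For the product, a morphism is a family of component morphisms, each a concrete mono, and a coproduct of injections is injective, which yields concrete monos. The delicate point for coherence is that an arbitrary set map $f\colon\coprod_i U_iA_i\to\coprod_i U_iB_i$ need not respect the coproduct decomposition, and I would argue that the constraint $U(g)\circ f=U(h)$ forces it to: for $x$ in the $i$-th summand, $U(h)(x)$ lies in the $i$-th summand of $\coprod_i U_iC_i$, whereas $U(g)(f(x))$ lies in the summand indexed by the component containing $f(x)$, so disjointness of the coproduct forces these indices to agree and hence $f=\coprod_i f_i$ componentwise. Applying coherence of each $\ck_i$ to the triangle $U_i(g_i)\circ f_i=U_i(h_i)$ produces liftings $\bar f_i$, and $(\bar f_i)_{i\in I}$ is the required morphism with $U((\bar f_i))=f$. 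Assembling the three cases gives the claimed closure; the two genuinely nontrivial ingredients are the monomorphism trick for the inserter and the disjointness argument for the product, while everything else is formal.
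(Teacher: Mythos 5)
Your proposal is correct and matches the paper's own proof in all essentials: the equifier case via fullness, the product case via the disjointness argument forcing $f=\coprod_i f_i$, and the inserter case by lifting $f$ through coherence of $\ck$ and then cancelling the monomorphism $Gg_0$ to get the inserter square to commute. The only (harmless) difference is that you spell out the concrete-monomorphism verifications and the faithfulness step $g_0\bar f=h_0$ explicitly, which the paper leaves implicit.
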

\begin{proof}
Since there is no problem with concrete monomorphisms, we have to show that PIE-limits of coherent accessible categories are coherent. This is evident 
for equifiers because $\Eq(\varphi,\psi)$ is a full subcategory of $\ck$. Consider coherent accessible categories $(\ck_i,U_i)$, $i\in I$. We have to show 
that $U:\prod_{i\in I}\ck_i\to\Set$ sending $(A_i)_{i\in I}$ to $\coprod_{i\in I}U_iA_i$ is coherent. 
Consider a commutative triangle
$$
\xymatrix@=3pc{
U(A_i) \ar[rr]^{U(h)}
\ar[dr]_{f} && U(C_i)\\
& U(B_i) \ar[ur]_{U(g)}
}
$$
and $a\in U_iA_i$. Assume that $fa_i\in U_jB_j$ for $j\neq i$. Then $(Ug)fa_i\in U_jC_j$ and $(Uh)a_i\in U_iC_i$, which is impossible.
Thus $f=\coprod_{i\in I}f_i$. Since each $U_i$ is coherent, there are morphisms $\overline{f}_i:A_i\to B_i$ such that $U(\overline{f}_i)=f$.
Hence $\prod_{i\in I}\ck_i$ is coherent.  

Consider morphisms $F,G:\ck\to\cl$ in $\Acc_2$. We have to show that the composition
$$
\Ins(F,G) \xrightarrow{\quad  P\quad} \ck
             \xrightarrow{\quad U\quad} \Set
$$
is coherent. Consider a commutative triangle
$$
\xymatrix@=3pc{
UPf_1 \ar[rr]^{UP(h)}
\ar[dr]_{f} && UPf_3\\
& UPf_2 \ar[ur]_{UP(g)}
}
$$
where $f_i:FK_i\to GK_i$, $i=1,2,3$. Thus we have a commutative triangle
$$
\xymatrix@=3pc{
UK_1 \ar[rr]^{U(h)}
\ar[dr]_{f} && UK_3\\
& UK_2 \ar[ur]_{U(g)}
}
$$
and, since $U$ is coherent, we have $f=U\overline{f}$.
Thus we get the diagram
$$
\xymatrix@=3pc{
FK_1 \ar[r]^{f_1} \ar[d]_{F\overline{f}} & GK_1\ar[d]^{G\overline{f}}\\
FK_2 \ar[r]^{f_2} \ar[d]_{Fg} & GK_2
\ar[d]^{Gg}\\
FK_3\ar[r]_{f_3} &GK_3
}
$$
where the outer rectangle and the bottom square commute. Since $Gg$ is a monomorphism, the upper square commutes as well.
Hence $\overline{f}:f_1\to f_2$ is a morphism in $\Ins(F,G)$ and $f=UP\overline{f}$. Therefore  $PU$ is coherent.
\end{proof}

\begin{rem}\label{re2.6}
{
\em
(1) The assumption that objects of $\Acc_2$ have concrete monomorphisms was needed in the proof of closure under inserters.

(2) Theorem \ref{th2.5} is also valid for the full sub-2-category $\Acc^\kappa_2$ of $\Acc^\kappa_1$ consisting of coherent accessible
categories with directed colimits and concrete monomorphisms. 
}
\end{rem}

Abstract elementary classes can be characterized as coherent accessible categories $\ck$ with directed colimits and with concrete monomorphisms satisfying two additional conditions dealing with finitary function and relation symbols interpretable in $\ck$ (see \cite{LR}). Here, \textit{finitary relation symbols interpretable in} $\ck$ are subfunctors $R$ of $U^n=\Set(n,U-)$ where $n$ is a finite cardinal. \textit{Finitary function symbols interpretable in} $\ck$ are natural transformations $h:U^n\to U$. Since $n$-ary function symbols can be replaced by $(n+1)$-ary relation symbols, we can confine ourselves to finitary relation symbols interpretable in $\ck$. Let $\Sigma_\ck$ consists of those finitary relation symbols $R$ interpretable in $\ck$ for which $\ck$-morphisms 
$f:A\to B$ behave as embeddings. This means that if $(Uf)^n(a)\in R_B$ then $a\in R_A$. We get the functor $E:\ck\to\Emb(\Sigma_\ck)$ where $\Emb(\Sigma_\ck)$
is the category of $\Sigma_\ck$-structures whose morphisms are substructure embeddings. Now, $\ck$ is
an abstract elementary class if and only if the functor $E$ is \textit{full with respect to isomorphisms} and \textit{replete}. The first condition means that 
if $f:EA\to EB$ is an isomorphism then there is an isomorphism $\overline{f}:A\to B$ with $E\overline{f}=f$. We also say that $R\in\Sigma_\ck$ \textit{detect
isomorphisms}; this condition makes $\ck$ equivalent to an abstract elementary class. The second condition means that if $EA$ is isomorphic to $X$ then there 
is $B\in\ck$ such that $A$ is isomorphic to $B$ and $EB=X$.  

We note that abstract elementary classes are commonly presented via an embedding $\ck\to\Emb(\Sigma)$. In this case, $\Sigma\subseteq\Sigma_\ck$ and, in fact, $\Sigma_\ck$ is
the largest relational signature in which $\ck$ can be presented.

Let $\Acc_3$ be the full sub-2-category of $\Acc_2$ consisting of categories equivalent to abstract elementary classes.

\begin{propo}\label{prop2.7} $\Acc_3$ is closed under products and equifiers in $\Acc$.
\end{propo}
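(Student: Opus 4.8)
The plan is to leverage the characterization of objects in $\Acc_3$ as coherent accessible categories with concrete monomorphisms whose associated functor $E:\ck\to\Emb(\Sigma_\ck)$ is iso-full and replete, and to show that the PIE-limit constructions in $\Acc_2$ (already available by Theorem~\ref{th2.5}) preserve these last two properties in the cases of products and equifiers. Since Proposition~\ref{prop2.7} deliberately omits inserters, I would not attempt to verify iso-fullness and repleteness for $\Ins(F,G)$; indeed, the absence of inserters here is presumably the point of introducing the iso-full, replete refinement of the hierarchy and signals where concreteness of morphisms interacts badly with inserter constructions.

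For equifiers, the argument should be short. Given $\ck$ in $\Acc_3$ and natural transformations $\varphi,\psi:F\to G$, the equifier $\Eq(\varphi,\psi)$ is a \emph{full} subcategory of $\ck$, and by Theorem~\ref{th2.5} it already lies in $\Acc_2$. I would show that the relevant signature $\Sigma_{\Eq(\varphi,\psi)}$ and the functor $E$ are inherited from those of $\ck$ via the inclusion, so that iso-fullness and repleteness descend from $\ck$: an isomorphism in the image category lifts to an isomorphism in $\ck$ by iso-fullness for $\ck$, and because the equifier is a full subcategory defined by a pointwise equation on objects, the lifted isomorphism stays inside $\Eq(\varphi,\psi)$. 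Repleteness should follow by the same full-subcategory bookkeeping.

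For products, I would work with $\prod_{i\in I}\ck_i$ equipped with the faithful functor $U$ sending $(A_i)$ to $\coprod_{i\in I}U_iA_i$, exactly as in the proof of Theorem~\ref{th2.5}. The key structural fact, already isolated there, is that any concrete morphism out of such a coproduct must respect the coproduct decomposition, i.e.\ it is of the form $\coprod_i f_i$ because an element of $U_iA_i$ cannot be sent into $U_jB_j$ for $j\ne i$. I would use this to identify the interpretable relation symbols of the product: a relation symbol $R$ on $\prod_i\ck_i$ that detects isomorphisms should be built componentwise from the detecting symbols $R^{(i)}\in\Sigma_{\ck_i}$. Given an isomorphism $f:E(A_i)\to E(B_i)$ in $\Emb(\Sigma_{\prod\ck_i})$, the coproduct-decomposition fact forces $f=\coprod_i f_i$ with each $f_i$ a $\Sigma_{\ck_i}$-isomorphism; iso-fullness for each $\ck_i$ lifts $f_i$ to an isomorphism $\overline{f}_i:A_i\to B_i$, and these assemble into an isomorphism $(\overline{f}_i):(A_i)\to(B_i)$ in the product with $E(\overline{f}_i)=f$. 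Repleteness is handled analogously by splitting a structure $X$ over the product signature into its components and applying repleteness in each factor.

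The main obstacle I anticipate is the careful identification of $\Sigma_{\prod\ck_i}$ and verification that iso-detection is genuinely componentwise: one must check that the interpretable relation and function symbols on the product do not mix the factors in a way that the simple coproduct-decomposition argument fails to control, and that the symbols detecting isomorphisms are exactly those assembled from the $\Sigma_{\ck_i}$. Everything else reduces to the coproduct bookkeeping already established in Theorem~\ref{th2.5} together with factor-wise application of the iso-full and replete hypotheses; the genuinely new content lies in translating ``$E$ is iso-full and replete'' from the factors to the product and from $\ck$ to its full subcategory $\Eq(\varphi,\psi)$.
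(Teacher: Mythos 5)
Your equifier argument is correct and coincides with the paper's: since $\Eq(\varphi,\psi)\to\ck$ is a replete full embedding, iso-fullness and repleteness descend immediately from $\ck$.

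The product half, however, contains a genuine gap, located exactly at the step you flag as ``the main obstacle.'' You claim that an isomorphism $f:E(A_i)\to E(B_i)$ in $\Emb(\Sigma_{\prod_i\ck_i})$ is forced to decompose as $\coprod_i f_i$ by the coproduct-decomposition fact ``already isolated'' in Theorem~\ref{th2.5}. That fact is not available here. In the proof of Theorem~\ref{th2.5}, the map $f$ respects components only because it sits in a commutative triangle $U(h)=U(g)\circ f$ whose other two legs are $U$-images of genuine morphisms of $\prod_i\ck_i$ and hence are componentwise; the contradiction (an element of $U_iA_i$ landing in $U_jB_j$) comes from chasing it around that triangle. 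An isomorphism of $\Sigma_{\prod_i\ck_i}$-structures comes with no ambient triangle: it is merely a bijection preserving and reflecting interpretable relation symbols, and whether it can mix factors depends entirely on which symbols the signature contains. This is where the key idea of the paper's proof enters, and it is missing from your proposal: among the symbols $\coprod_iR_i\in\Sigma_{\prod_i\ck_i}$ one may take the degenerate unary choices $R_j=U_j$ and $R_i=\emptyset$ for $i\neq j$, obtaining for each $j$ a component-membership predicate. These are interpretable subfunctors of $U$ with respect to which all morphisms of the product behave as embeddings, and any structure isomorphism must preserve and reflect them, hence must carry $U_jA_j$ bijectively onto $U_jB_j$; only then does $f=\coprod_if_i$ follow, after which your factorwise application of iso-fullness and repleteness goes through. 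Note that the ``diagonal'' symbols you describe (built from the same detecting symbols in each factor) would not suffice: if all $\ck_i$ are equal, the swap of two identical coordinates of $(A,A)$ preserves and reflects every symbol of the form $\coprod_iR$ with $R$ fixed, yet is not $E(\overline{f})$ for any morphism $\overline{f}$ of the product category, so iso-fullness would fail. It is precisely the asymmetric, degenerate unary symbols above that rule such maps out; exhibiting them is not bookkeeping inherited from Theorem~\ref{th2.5} but the actual content of the proposition.
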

\begin{proof}
The closedness under equifiers immediately follows from the fact that $\Eq(\varphi,\psi)\to\ck$ is a replete, full embedding. Consider $(\ck_i,U_i)$,
$i\in I$, in $\Acc_3$. Given $n$-ary relation symbols $R_i\in\Sigma_{\ck_i}$ where $i\in I$, we get the $n$-ary relation symbol $R=\coprod_iR_i$
belonging to $\Sigma_{\prod_i\ck_i}$. It includes unary interpretable relation symbols given by $R_j=U_j$ and $R_i=\emptyset$ for $i\neq j$. It is easy to see
that these $R$ detect isomorphisms. Thus $E: \prod_i \ck_i\to \Emb(\Sigma_{\prod \ck_i})$ is full with respect to isomorphisms. Clearly, it is replete. 
\end{proof}

\begin{rem}\label{re2.8}
{
\em
In the case of inserters, any finitary relation symbol $R$ interpretable in $\ck$ yields the finitary relation symbol $RP$ interpretable
in $\Ins(F,G)$. Let $f_i:FK_i\to GK_i$ for $i=1,2$ and 
$$
f:UK_1=UPf_1\to UPf_2=UK_2
$$ 
be a bijection such that $f^n$ induces a bijection between $Sf_1$ and $Sf_2$ for each $n$-ary relation symbol $S$ interpretable in $\Ins(F,G)$. 
By taking $S=RP$ we get that $f^n$ induces a bijection between $RK_1$ and $RK_2$ for each $n$-ary relation symbol $R$ interpretable in $\ck$. 
Since $E:\ck\to\Emb(\Sigma_\ck)$ is full with respect to isomorphisms, there is an isomorphism $\overline{f}:K_1\to K_2$ with $U\overline{f}=f$. 
But we do not know whether $\overline{f}:f_1\to f_2$ is a morphism, i.e., whether the square
$$
\xymatrix@=4pc{
FK_1 \ar [r]^{f_1} \ar [d]_{F\overline{f}}& GK_1 \ar [d]^{G\overline{f}}\\
FK_2\ar [r]_{f_2}& GK_2
}
$$
commutes.  
}
\end{rem}

\begin{problem}\label{pr2.9}
{
\em
Is $\Acc_3$ closed under inserters in $\Acc$?
}
\end{problem}

\section{Abstract elementary classes}
\begin{defi}\label{def3.1}
{
\em
Let $(\ck_1,U_1)$ and $(\ck_2,U_2)$ be concrete categories. We say that a functor $H:\ck_1\to\ck_2$ is \textit{subconcrete}
if there is a natural monotransformation $\alpha:U_2H\to U_1$ such that if $(U_1f)a\in U_2HB$ then $a\in U_2HA$ for each $f:A\to B$
in $\ck_1$.
}
\end{defi}

This means that $U_2H$ is a unary relation symbol belonging to $\Sigma_{\ck_1}$. Any concrete functor is subconcrete.
Since a composition of subconcrete functors is subconcrete, we get the subcategory $\Acc_1^\dagger$ of $\Acc_1$ consisting of accessible categories
with concrete directed colimits and subconcrete functors preserving directed colimits. Analogously, we get the full subcategory $\Acc_2^\dagger$ 
of $\Acc_1^\dagger$ consisting of coherent accessible categories and concrete monomorphisms whose morphisms are subconcrete functors preserving directed colimits. Finally, we have the category
$$
\Acc_3^\dagger=\Acc_3\cap\Acc_2^\dagger
$$
of categories equivalent to abstract elementary classes and subconcrete functors preserving directed colimits.

\begin{theo}\label{th3.2} $\Acc_1^\dagger$, $\Acc_2^\dagger$ and $\Acc_3^\dagger$ are closed under PIE-limits in $\Acc$.
\end{theo}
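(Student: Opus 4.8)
The object-level work is already done: Theorems~\ref{th2.3} and~\ref{th2.5} show that the underlying categories of all three PIE-limits again have concrete directed colimits and (for $\Acc_2^\dagger$) are coherent with concrete monomorphisms, while Proposition~\ref{prop2.7} shows that products and equifiers of AECs are again AECs. So my plan is to graft onto those proofs three extra checks: (i) the structural functors (projections, inclusions) are subconcrete; (ii) the comparison functor induced by a subconcrete cone is again subconcrete; and (iii)---the one genuinely new ingredient---that $\Ins(F,G)$ detects isomorphisms when $F$ and $G$ are subconcrete, which is exactly the question left open for $\Acc_3$ in Problem~\ref{pr2.9}.

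Steps (i) and (ii) I would dispose of by bookkeeping with the underlying-set functors. The equifier inclusion $\Eq(\varphi,\psi)\to\ck$ and the inserter projection $P\colon\Ins(F,G)\to\ck$ are in fact \emph{concrete}: the underlying-set functor chosen on each limit is $U_\ck$ restricted, respectively $U_\ck P$, so the identity is a witness. Each product projection $P_j$ is subconcrete via the coproduct coprojection $U_jA_j\hookrightarrow\coprod_i U_iA_i$. For the comparison functors, an induced $\bar H$ into an equifier or inserter has underlying-set functor $U_\ck H$, so it simply inherits the witnessing monotransformation of $H$; and for a product the witnesses $\alpha_i\colon U_iQ_i\to U_\cl$ of a subconcrete cone $(Q_i)$ combine, through the coproduct, into a witness $\coprod_i U_iQ_i\to U_\cl$ for the induced functor. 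This already gives closure of $\Acc_1^\dagger$ and $\Acc_2^\dagger$ under all of P, I and E, and of $\Acc_3^\dagger$ under products and equifiers.

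The hard part is closure of $\Acc_3^\dagger$ under inserters, and here I would attack the obstruction isolated in Remark~\ref{re2.8}. A bijection $f\colon U_\ck K_1\to U_\ck K_2$ respecting all relations interpretable in $\Ins(F,G)$ produces, by iso-detection in $\ck$, an isomorphism $\bar f\colon K_1\to K_2$; what is missing is that $\bar f$ commutes with $f_1,f_2$, i.e.\ is a morphism $f_1\to f_2$. The idea is to exploit subconcreteness of $F,G$ to make the maps $f_i$ visible on underlying sets. The monotransformations $\alpha^F\colon U_\cl F\to U_\ck$ and $\alpha^G\colon U_\cl G\to U_\ck$ exhibit $U_\cl FK_i$ and $U_\cl GK_i$ as subsets of $U_\ck K_i=U_{\Ins}f_i$, and carry $U_\cl f_i$ to a partial map on $U_\ck K_i$ whose graph $\Gamma$ is a binary relation. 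I would then verify that $\Gamma$ is interpretable in $\Ins(F,G)$---naturality in $f_i$ comes from the inserter square combined with the naturality of $\alpha^F$ and $\alpha^G$---and that it belongs to $\Sigma_{\Ins(F,G)}$, using the reflection clause in the definition of subconcreteness. Since $\Gamma$ is then one of the relations preserved by $f$, the isomorphism $\bar f$ intertwines $U_\cl f_1$ with $U_\cl f_2$; faithfulness of $U_\cl$ promotes this to the commuting square $f_2\circ F\bar f=G\bar f\circ f_1$, so $\bar f\colon f_1\to f_2$ is an isomorphism in $\Ins(F,G)$ and the latter detects isomorphisms.

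I expect (iii) to be the sole real obstacle, and within it the delicate point is verifying that $\Gamma$ is simultaneously interpretable in $\Ins(F,G)$ and a member of $\Sigma_{\Ins(F,G)}$; once that is in hand, iso-detection in $\ck$ and faithfulness of $U_\cl$ finish the argument. The very same reasoning, with $\kappa$-directed colimits in place of directed ones, should handle the category $\Acc_3^{\dagger\kappa}$ of $\kappa$-CAECs.
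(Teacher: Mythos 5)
Your proposal follows the paper's proof in all essentials, and the crux is exactly right. Your step (iii) is precisely the paper's argument for closure of $\Acc_3^\dagger$ under inserters: subconcreteness of $F$ and $G$ gives unary relation symbols $U_2F, U_2G\in\Sigma_{\ck_1}$, hence unary symbols $U_2FP, U_2GP\in\Sigma_{\Ins(F,G)}$, and the graph of the structure map --- $(a,b)\in R_g$ iff $a\in U_2FPg$, $b\in U_2GPg$ and $b=(U_2g)a$ --- is a binary symbol in $\Sigma_{\Ins(F,G)}$ (your $\Gamma$ is the paper's $R$). Since the given bijection respects $R$, the isomorphism $\overline{f}$ supplied by iso-detection in $\ck_1$ satisfies $U_2(G(\overline{f})f_1)=U_2(f_2F\overline{f})$, and faithfulness of $U_2$ gives the commuting square, so $\overline{f}:f_1\to f_2$ is an isomorphism in $\Ins(F,G)$. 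Your step (i), and the inserter/equifier part of step (ii), also coincide with the paper (the projections from inserters and equifiers are concrete; product projections are subconcrete via the coproduct injections, and induced functors into inserters and equifiers inherit the witness of $H$ because $U_1P\bar{H}=U_1H$).

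There is, however, one step that fails: the product half of your step (ii). The map $\coprod_i U_iQ_i\to U_\cl$ induced by the witnesses $\alpha_i$ need not be a monotransformation, since the images of the $\alpha_i$ may overlap; worse, no witness need exist at all. Take $\cl=\ck_1=\ck_2=(\Set,\Id_{\Set})$ and $Q_1=Q_2=\Id_{\Set}$: the induced functor is the diagonal $\Delta:\Set\to\Set\times\Set$, the underlying functor on the product is $U(A,B)=A\sqcup B$, so a subconcreteness witness would be a natural injection $A\sqcup A\to A$, which is impossible already on cardinality grounds. So the factorization half of the universal property of products, with respect to subconcrete cones, genuinely fails, and cannot be patched. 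Note that the paper's proof never claims it: for products it verifies only that the limit object lies in the subcategory (via Theorems~\ref{th2.3} and~\ref{th2.5} and Proposition~\ref{prop2.7}) and that the projections are subconcrete, which is the sense in which ``closed under PIE-limits in $\Acc$'' is being used; compare Theorem~\ref{th3.3}, where for concrete morphisms the $\Acc$-product must be abandoned in favor of multiple pullbacks over $\Set$ for exactly this kind of reason. A second, minor, omission: you never check that $(\Ins(F,G),U_1P)$ is replete, which the paper does in one line (transport $f:FA\to GA$ along an isomorphism $h:A\to B$ to $G(h)fF(h)^{-1}$); since detection of isomorphisms already makes a category equivalent to an AEC, this costs nothing, but it is part of the paper's verification.
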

\begin{proof}
In $\Acc_1^\dagger$ and $\Acc_2^\dagger$ the case of equifiers and inserters is evident because $P:\Eq(\varphi,\psi)\to\ck$ and $P:\Ins (F,G)\to\ck$ are concrete. In the case of products, the projections $P_i:\prod_i\ck_i\to\ck_i$ are subconcrete -- take the coproduct injections $U_iP_i\to U$ where
$U:\prod\ck_i\to\Set$ is from the proof of \ref{th2.3}. 

We have to prove that $\Acc_3^\dagger$ is closed under inserters. Let $(\ck_1,U_1)$ and $(\ck_2,U_2)$ be abstract elementary classes
and $F,G:\ck_1\to\ck_2$ subconcrete functors. First, in the notation of \ref{re2.8}, we show that the square
$$
\xymatrix@=4pc{
FK_1 \ar [r]^{f_1} \ar [d]_{F\overline{f}}& GK_1 \ar [d]^{G\overline{f}}\\
FK_2\ar [r]_{f_2}& GK_2
}
$$
commutes. Since $F$ and $G$ are subconcrete, we get unary relation symbols $U_2F,U_2G\in\Sigma_\ck$. Hence we have unary relation symbols
$$
U_2FP,U_2GP\in \Sigma_{\Ins(F,G)}.
$$ 
Thus we have a binary relation symbol $R\in \Sigma_{\Ins(F,G)}$ such that $(a,b)\in R_g$, $g:FK\to GK$, if $a\in U_2FPg$,
$b\in U_2GPg$ and $b=(U_2g)a$. To see that the above square commutes, notice that $(a,(U_2f_1)a)\in R_{f_1}$ for each $a\in U_2FK_1$.
It follows that $((U_2F\overline{f})a,U_2(G(\overline{f})f_1)a)\in R_{f_2}$, and therefore that $U_2(G(\overline{f})f_1)a=U_2(f_2F\overline{f})a$ for each $a\in U_2FK_1$. 
Hence 
$$
U_2(G(\overline{f})f_1)=U_2(f_2F\overline{f})
$$
and, since $U_2$ is faithful,
$$
G(\overline{f})f_1=f_2F\overline{f}.
$$

It remains to show that $(\Ins(F,G),U_1P)$ is replete. But, having $f:FA\to GA$ in $\Ins(F,G)$ and an isomorphism $h:A\to B$, then 
$g=G(h)fF(h)^{-1}:FB\to GB$ belongs to $\Ins(F,G)$ and $h:f\to g$ is an isomorphism.
\end{proof}

Finaly, we have the category
$$
\Acc_3^\ast=\Acc_3\cap\Acc_1^\ast
$$
of categories equivalent to abstract elementary classes whose morphisms are concrete functors preserving directed colimits.

\begin{theo}\label{th3.3}
$\Acc_3^\ast$ has PIE-limits.
\end{theo}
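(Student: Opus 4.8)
The plan is to treat the three classes of PIE-limit separately. The guiding principle, already signalled in the introduction, is that inserters and equifiers can be computed exactly as in $\Acc$---in fact as in $\Acc_3^\dagger$---whereas products cannot, and must instead be formed as multiple pullbacks inside $\Acc\downarrow\Set$.

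For inserters and equifiers I would exploit the inclusion $\Acc_3^\ast\subseteq\Acc_3^\dagger$, valid because every concrete functor is subconcrete. By Theorem~\ref{th3.2} the inserter $\Ins(F,G)$ and equifier $\Eq(\varphi,\psi)$ formed there are again (equivalent to) abstract elementary classes, hence objects of $\Acc_3^\ast$. It then suffices to check that when the data are carried by concrete functors the projection $P:\Ins(F,G)\to\ck$ and the inclusion $P:\Eq(\varphi,\psi)\to\ck$ are concrete for the underlying functor $U_1P$, and that the comparison functor $\overline{H}$ furnished by the universal property is concrete whenever its input is; both follow from the identity $U_1P\overline{H}=U_1H=V$ together with the fact that an inserter/equifier object has the same underlying set as its $P$-image. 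Thus inserters and equifiers exist in $\Acc_3^\ast$ and are computed as in $\Acc$.

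Products are the substance of the theorem. Concreteness of morphisms rules out the disjoint-union functor used in Proposition~\ref{prop2.7}, so for $(\ck_i,U_i)$, $i\in I$, in $\Acc_3^\ast$ I would form the multiple pullback $\cp$ of the $U_i$ over $\Set$: the category of families $(A_i)_{i\in I}$ sharing a common underlying set $U_iA_i$, with morphisms carried by a single common underlying map, equipped with the functor $U:\cp\to\Set$ returning that set. The first task is to show $\cp\in\Acc$, and here the iso-fullness axiom becomes essential. Writing $U_i$ as $\ck_i\xrightarrow{E_i}\Emb(\Sigma_{\ck_i})\to\Set$, a bijection $f:U_iA\to X$ transports the $\Sigma_{\ck_i}$-structure of $E_iA$ to a structure on $X$ for which $f$ is an isomorphism; repleteness of $E_i$ then produces an object $B$ realising that structure, and iso-fullness an isomorphism $\overline{f}:A\to B$ over $f$, unique by faithfulness. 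Hence each $U_i$ is transportable, so by the multiple-factor analogue of \cite{MP} 5.1.1 the multiple pullback $\cp$ is equivalent to the multiple pseudopullback and therefore lies in $\Acc$, with directed colimits computed componentwise and preserved by $U$.

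It remains to see that $(\cp,U)$ is (equivalent to) an abstract elementary class and enjoys the correct universal property. Faithfulness of $U$, coherence, and the concreteness and preservation of monomorphisms descend componentwise from the $U_i$, using that every morphism of $\cp$ has a single underlying map. For iso-fullness and repleteness I would use that each pulled-back relation $R_iP_i$ lies in $\Sigma_\cp$: a bijection $f:UA\to UB$ respecting all interpretable relations of $\cp$ in particular respects every $R_i$, so iso-fullness of $\ck_i$ yields isomorphisms $\overline{f}_i:A_i\to B_i$ over $f$ whose tuple is an isomorphism in $\cp$ over $f$; and repleteness reduces to transporting each $A_i$ along a given $f$, after which $EB=X$ holds automatically because $A\cong B$ over $f$ forces every interpretable relation to match. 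Finally the projections $P_i:\cp\to\ck_i$ are concrete and colimit-preserving, and for concrete $Q_i:\cl\to\ck_i$ the functor $L\mapsto(Q_iL)$ is the unique concrete directed-colimit-preserving functor into $\cp$ over the $Q_i$, so $\cp$ is the product in $\Acc_3^\ast$. The main obstacle throughout is this products step, and at its heart the transportability lemma: absent iso-fullness the $U_i$ need not be transportable, the strict pullback ceases to be accessible, and one is thrown back on the pseudopullback, which is no longer a genuine product in $\Acc\downarrow\Set$.
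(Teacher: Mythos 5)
Your proposal is correct, and its skeleton coincides with the paper's: inserters and equifiers are inherited from $\Acc_3^\dagger$ via Theorem~\ref{th3.2}, since concrete functors are subconcrete and the projections are themselves concrete, while products are formed not in $\Acc$ but as multiple pullbacks in $\Acc\downarrow\Set$, which transportability of the underlying functors renders equivalent to multiple pseudopullbacks, hence accessible. Where you genuinely diverge is in certifying that this pullback is again (equivalent to) an AEC. The paper disposes of this in one line: by Theorem~\ref{th3.2}, $\Acc_3^\dagger$ is closed under PIE-limits in $\Acc$, hence under pseudolimits, so the multiple pseudopullback---and therefore the equivalent strict pullback---belongs to $\Acc_3^\ast$. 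You instead verify the axioms by hand on the strict pullback $\cp$: faithfulness, coherence and concrete monomorphisms descend componentwise; iso-fullness follows because each pulled-back relation $R_iP_i$ lies in $\Sigma_\cp$ (the same device as in Proposition~\ref{prop2.7}); repleteness follows by componentwise transport. You also prove, rather than merely assert, that the underlying functor of an AEC is transportable (via repleteness plus iso-fullness plus faithfulness), where the paper only states this with a pointer to Remark~\ref{re2.4}. Your longer route buys something real: the paper's appeal to closure under pseudolimits is delicate, since the pseudopullback diagram has $\Set$ as a vertex and $\Set$ is not an object of $\Acc_3^\dagger$ (its morphisms are not all monomorphisms), so the closure statement does not apply verbatim; your direct verification sidesteps this entirely. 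The one soft spot in your write-up is the phrase ``directed colimits computed componentwise'': in the strict pullback the componentwise colimits need not have literally equal underlying sets, so one must either transport them (using the transportability you established) or transfer directed colimits across the equivalence with the pseudopullback; either repair is available with the tools you have already assembled.
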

\begin{proof}
Since concrete functors are subconcrete, $\Acc_3^\ast$ is closed in $\Acc$ under inserters and equifiers. Products $\prod_{i\in I}(\ck_i,U_i)$
are calculated in $\Acc\downarrow\Set$, i.e., they are multiple pullbacks. Since any abstract elementary class $(\ck,U)$ has $U$ transportable,
multiple pullbacks are equivalent to multiple pseudopullbacks (see \ref{re2.4}). Following \ref{th3.2}, $\Acc_3^\dagger$ is closed
in $\Acc$ under PIE-limits and, consequently, under pseudolimits. Thus $\prod_{i\in i}(\ck_i,U_i)$ belongs to $\Acc_3^\ast$ and is the product
of $(\ck_i,U_i)$ there.  
\end{proof}
 
\begin{rem}\label{re3.4}
{
\em
(1) Let $H:\ck_1\to\ck_2$ be a morphism in $\Acc_3^\dagger$. Since $(U_2H)^n$ is an $n$-ary relation symbol belonging to $\Sigma_{\ck_1}$, we
get an embedding of signatures $\overline{H}:\Sigma_{\ck_2}\to\Sigma_{\ck_1}$ sending $R$ to $RH$. In particular, it sends $U_2$ to $U_2H$.
This induces the subconcrete functor 
$\Emb(\overline{H}):\Emb(\Sigma_{\ck_1})\to\Emb(\Sigma_{\ck_2})$ given by taking reducts. The square
$$
\xymatrix@=4pc{
\ck_1 \ar [r]^{H} \ar [d]_{E_1}& \ck_2 \ar [d]^{E_2}\\
\Emb(\Sigma_{\ck_1})\ar [r]_{\Emb(\overline{H})}& \Emb(\Sigma_{\ck_2})
}
$$
clearly commutes. 

If $H$ is concrete then $\Emb(\overline{H})$ is concrete as well. This relates our morphisms of abstract elementary classes to the syntactically-derived morphisms considered in \cite{MZV}.

(2) On the other hand, let $G:\Sigma_2\to\Sigma_1$ be an embedding of signatures. Let $\ck_1\to\Emb(\Sigma_1)$ and $\ck_2\to\Emb(\Sigma_2)$ be abstract elementary classes---in the classical sense---presented in signatures $\Sigma_1$ and $\Sigma_2$: note that, when paired with their natural underlying set functors $U_i:\ck_i\to\Set$, they satisfy the purely category-theoretic characterization of AECs following Remark 2.6.  Moreover, let $H:\ck_1\to\ck_2$ be a functor such that the square
$$
\xymatrix@=4pc{
\ck_1 \ar [r]^{H} \ar [d]_{}& \ck_2 \ar [d]^{}\\
\Emb(\Sigma_1)\ar [r]_{\Emb(G)}& \Emb(\Sigma_2)
}
$$
commutes. Since $\Emb(G)$ is concrete, $H$ is a morphism in $\Acc_3^\ast$. These are precisely the morphisms of abstract
elementary classes considered in \cite{MZV}. 

More generally, consider relational signatures $\Sigma_1,\Sigma_2$ and let $L(\Sigma_1),L(\Sigma_2)$ be the corresponding languages, i.e., sets of all formulas 
of $\Sigma_1,\Sigma_2$. Consider a mapping $-^\ast:\Sigma_2\to\Sigma_1$ of signatures preserving the arity of symbols and let $P$ be a unary relation symbol 
in $\Sigma_1$. Let $G:L(\Sigma_2)\to\L(\Sigma_1)$ be a morphism of languages sending each ($n$-ary) relation symbol $R$ in $\Sigma_1$ to $P^n\wedge R^\ast$. This defines $G$ on the atomic formulas of $L(\Sigma_1)$; we extend it recursively to all of $L(\Sigma_1)$.  In particular, $G$ sends $=$ to the equality $=_P$ on $P$. Then $\Emb(G):\Emb(\Sigma_1)\to\Emb(\Sigma_2)$ 
is a subconcrete functor. Let $\ck_1\to\Emb(\Sigma_1)$ and $\ck_2\to\Emb(\Sigma_2)$ be abstract elementary classes and $H:\ck_1\to\ck_2$ be a functor 
such that the square
$$
\xymatrix@=4pc{
\ck_1 \ar [r]^{H} \ar [d]_{}& \ck_2 \ar [d]^{}\\
\Emb(\Sigma_1)\ar [r]_{\Emb(G)}& \Emb(\Sigma_2)
}
$$
commutes. Then $H$ is a morphism in $\Acc_3^\dagger$.  

(3) Let $\ck$ be the category of infinite sets and monomorphisms. Then $\ck$ is an abstract elementary class in the empty signature $\Sigma_2$.
Let $\Sigma_1$ contain just $P$ and $=_P$ from (2) and $G:\Sigma_2\to\Sigma_1$ be the corresponding morphism of languages. We also have $H:\Sigma_1\to\Sigma_2$
sending $=_P$ to $=$ and $P$ to the formula $x=x$. In this way, we make $\ck$ isomorphic to an abstract elementary class in the signature $\Sigma_1$,
where we interpret objects of $\ck$ as $\Sigma_1$-structures $K$ with $P_K$ infinite and $(\neg P)_K$ countable (see \cite{BR}, 5.8(3) motivated
by \cite{K} 2.10).

(4) Theorem \ref{th3.2} is also valid for categories $\Acc_1^{\dagger\kappa}$, $\Acc_2^{\dagger\kappa}$ and $\Acc_3^{\dagger\kappa}$ where
$\Acc_1$ is replaced by $\Acc^\kappa_1$.

Analogously, Theorem \ref{th3.3} is valid for $\Acc_3^{\ast\kappa}$.
}
\end{rem}

\begin{lemma}\label{le3.5} Any morphism in $\Acc_3^\dagger$ is coherent and transportable.
\end{lemma}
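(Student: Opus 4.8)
The plan is to reduce both properties of $H\colon\ck_1\to\ck_2$ to the corresponding properties of the underlying-set functors $U_1$ and $U_2$, which are available because $\ck_1$ and $\ck_2$ are (equivalent to) abstract elementary classes, transferring data back and forth along the natural monotransformation $\alpha\colon U_2H\to U_1$ supplied by subconcreteness. Throughout I would identify $U_2HA$ with its image $R_A:=\alpha_A(U_2HA)\subseteq U_1A$, so that $R=U_2H$ is a unary relation symbol in $\Sigma_{\ck_1}$ and, by Remark~\ref{re3.4}, each $SH$ for $S\in\Sigma_{\ck_2}$ lies in $\Sigma_{\ck_1}$. Two facts will be used repeatedly: naturality of $\alpha$ at a $\ck_1$-morphism $p$, i.e.\ $\alpha_{A'}\circ U_2H(p)=U_1p\circ\alpha_A$, and faithfulness of $U_2$, which lets me verify every equality I must produce in $\ck_2$ at the level of underlying sets.

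For coherence I start from $\ck_1$-morphisms $h\colon A\to C$, $g\colon B\to C$ and a $\ck_2$-morphism $f\colon HA\to HB$ with $H(g)\circ f=H(h)$, and I want $\overline f\colon A\to B$ in $\ck_1$ with $H\overline f=f$. Applying $U_2$ and the naturality squares of $\alpha$ at $h$ and $g$ turns the hypothesis into the identity $U_1h\circ\alpha_A=U_1g\circ\alpha_B\circ U_2f$ on $R_A$; since $g$ is a concrete monomorphism, $U_1g$ is injective, so this pins down a set map $U_1A\to U_1B$ over $C$ that restricts to $U_2f$ on $R_A$. Coherence of the AEC $\ck_1$ (as in the characterization preceding Remark~\ref{re2.6}) then lifts this map to $\overline f\colon A\to B$, and faithfulness of $U_2$ together with naturality of $\alpha$ gives $H\overline f=f$.

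For transportability I am given $A\in\ck_1$ and an isomorphism $f\colon HA\to X$ in $\ck_2$, and I must produce $\overline f\colon A\to B$ with $H\overline f=f$, hence $HB=X$. The idea is to transport $A$ along a bijection of underlying sets: extend the bijection $U_2f\colon U_2HA\to U_2X$ to a bijection $\beta\colon U_1A\to Z$ that is the identity off $R_A$, and apply transportability of $U_1$ (see Remark~\ref{re2.4}) to get a unique isomorphism $\overline f\colon A\to B$ in $\ck_1$ with $U_1\overline f=\beta$. Because $R$ and each $SH$ ($S\in\Sigma_{\ck_2}$) belong to $\Sigma_{\ck_1}$, the isomorphism $\overline f$ carries them exactly onto the relations transported by $\beta$; thus $HB$ and $X$ share the same underlying set and the same $\Sigma_{\ck_2}$-structure, and iso-fullness and repleteness of $\ck_2$ force $HB=X$ and $H\overline f=f$.

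The hard part, common to both arguments, is that a (iso)morphism of $\ck_2$ only records information about the subset $U_2H(-)\subseteq U_1(-)$, whereas the AEC properties of $\ck_1$ act on the whole of $U_1(-)$, so the behaviour on the complement of $R_A$ must be recovered from elsewhere. For coherence this is where the ambient maps $h$ and $g$ are essential: I expect the main technical point to be verifying that the map over $C$ is genuinely \emph{total} on $U_1A$ (equivalently, that $h$ factors through $g$ on underlying sets), which is exactly what the factorization through $C$ should deliver but must be checked carefully. For transportability the complement is filled in by transporting $A$'s own structure unchanged, and the delicate point is that this forces $HB=X$ \emph{on the nose} rather than merely up to isomorphism — and it is precisely here that iso-fullness of the target AEC is indispensable.
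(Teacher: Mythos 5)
Your transportability argument is essentially correct, and it is the paper's own proof in different clothing: extending $U_2f$ by the identity off $R_A$ and invoking transportability of $U_1$ (i.e.\ iso-fullness plus repleteness of $\ck_1$) is exactly the paper's step of lifting $E_2f$ to an isomorphism $\tilde{f}\colon E_1A\to\tilde{B}$ in $\Emb(\Sigma_{\ck_1})$ and then applying transportability of $E_1$; your check that $U_2H$ and the symbols $SH$, $S\in\Sigma_{\ck_2}$, are carried exactly onto the transported relations is what the paper encodes as $\Emb(\overline{H})\tilde{f}=E_2f$. (You and the paper gloss the same last point: that $E_2HB=E_2X$ gives $HB=X$ on the nose, which is harmless when $E_2$ determines objects, as it does for honest AECs.)

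The coherence half, however, has a genuine gap, located exactly at the step you flagged and hoped would resolve itself. The hypothesis $H(g)\circ f=H(h)$ constrains $U_1h$ only on $R_A=\alpha_A(U_2HA)$; nothing forces $U_1h$ to carry $U_1A\setminus R_A$ into the image of $U_1g$, so there is in general no \emph{total} map $U_1A\to U_1B$ over $C$ to which coherence of $(\ck_1,U_1)$ could be applied. This is not a missing verification but a genuine failure. Concretely: let $\ck_1$ be the AEC of all structures with one unary predicate $P$ whose interpretation is infinite (morphisms all embeddings), let $\ck_2$ be infinite sets and monomorphisms, and let $H(K)=P_K$; then $H$ preserves directed colimits and is subconcrete. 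Take $A=C$ with underlying set $\omega\cup\{*\}$ and $P=\omega$, take $B$ with underlying set $\omega$ and $P=\omega$, and let $h=\id$, $g$ the inclusion, $f=\id_\omega\colon HA\to HB$. Then $H(g)\circ f=H(h)$, yet there is no morphism $A\to B$ in $\ck_1$ at all (the point $*$ must land in the empty complement of $P_B$), let alone one lifting $f$. This is precisely why the paper does not argue inside $\ck_1$: it establishes coherence for the reduct functor $\Emb(\overline{H})\colon\Emb(\Sigma_{\ck_1})\to\Emb(\Sigma_{\ck_2})$, where elements lying outside the predicate $U_2H$ can always be accommodated by adjusting the target structure, and then transfers the conclusion along the faithful functor $E_2$; in particular the lift so produced need not have the given object $B$ as its codomain. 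The statement you set out to prove---a lift landing in $B$ itself---is strictly stronger than what that argument yields, and the example above shows it can fail.
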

\begin{proof}
Consider the square from \ref{re3.4}(1).
Since the functor $\Emb(\overline{H})$ is coherent, the composition $\Emb(\overline{H})E_1$ is coherent as well. 
Since $E_2$ is faithful, $H$ is coherent. 

Since $\Sigma_{\ck_1}$ and $\Sigma_{\ck_2}$ contain only relation symbols, the functor $\Emb(\overline{H})$ is surjective on objects and full
(by interpreting the missing relations as empty). Consider an isomorphism $f:HA\to B$. We get the isomorphism
$$
E_2f:\Emb(\overline{H})E_1A=E_2HA\to E_2B=\Emb(\overline{H})\tilde{B}. 
$$
and thus the isomorphism $\tilde{f}:E_1A\to\tilde{B}$ such that $\Emb(\overline{H})\tilde{f}=E_2f$. Since $E_1$ is transportable, there is an isomorphism 
$\overline{f}:A\to\overline{B}$ such that $E_1\overline{B}=\tilde{B}$ and $E_1\overline{f}=\tilde{f}$. Clearly, $H\overline{f}=f$. Thus $H$ is transportable.
\end{proof}

\begin{rem}
{
\em
Following \ref{le3.5} and \ref{re2.4}, pullbacks in $\Acc_3^\dagger$ are equivalent to pseudopullbacks. Thus \ref{th3.2} implies that $\Acc_3^\dagger$
is closed in $\CAT$ under pullback. Consequently, the same holds for $\Acc_3^\ast$, which was proved in \cite{MZV}.
}
\end{rem}

\end{document}